\newtheorem{theorem}{Theorem}
\newtheorem{proposition}[theorem]{Proposition}     
\theoremstyle{definition}  
\newtheorem{definition}[theorem]{Definition}     
\newtheorem{example}[theorem]{Example}     
\newtheorem{remark}[theorem]{Remark}     
\newcounter{FNC}[page]
\def\fauxfootnote#1{{\addtocounter{FNC}{2}$\Magenta{^\fnsymbol{FNC}}$%
     \let\thefootnote\relax\footnotetext{\Magenta{$^\fnsymbol{FNC}$#1}}}}
\newcommand{\CC}{{\mathbb C}}
\newcommand{\GG}{{\mathbb G}}
\newcommand{\PP}{{\mathbb P}}
\newcommand{\QQ}{{\mathbb Q}}
\newcommand{\ZZ}{{\mathbb Z}}
\newcommand{\calE}{{\mathcal E}}
\newcommand{\calO}{{\mathcal O}}
\newcommand{\alg}{{\rm alg}}
\newcommand{\id}{{\it id}}
\newcommand{\defcolor}[1]{\RoyalBlue{#1}}
\newcommand{\demph}[1]{\defcolor{{\sl #1}}}
\DeclareMathOperator{\cl}{{\rm cl}}
\DeclareMathOperator{\rk}{{\rm rk}}
\DeclareMathOperator{\Hom}{{\rm Hom}}
\DeclareMathOperator{\Rat}{{\rm Rat}}
\DeclareMathOperator{\Alg}{{\rm Alg}}
\DeclareMathOperator{\Num}{{\rm Num}}
\title[General witness sets for numerical algebraic geometry]{General witness sets for numerical algebraic geometry}
\author[F.~Sottile]{Frank Sottile}     
\address{Frank Sottile\\     
         Department of Mathematics\\     
         Texas A\&M University\\     
         College Station\\     
         Texas \ 77843\\     
         USA}     
\email{sottile@math.tamu.edu}     
\urladdr{\url{http://www.math.tamu.edu/~sottile}}   
\thanks{Research of Sottile supported in part by NSF grant DMS-1501370 and
  Simons Foundation Collaboration Grant for Mathematics Number 636314.}
\subjclass{65H10, 14C17, 14M15}
\keywords{numerical algebraic geometry, intersection theory, witness set, Schubert variety}      
\begin{document}     
     
\begin{abstract}
  Numerical algebraic geometry has a close relationship to intersection theory from algebraic geometry.
  We deepen this relationship, explaining how rational or algebraic equivalence gives a homotopy.
  We present a general notion of witness set for
  subvarieties of a smooth complete complex algebraic variety using ideas from intersection theory.
  Under appropriate assumptions, general witness sets enable numerical algorithms such as sampling and membership.
  These assumptions hold for products of flag manifolds.
  We introduce Schubert witness sets, which provide general witness sets for Grassmannians and flag manifolds.
\end{abstract}       

\maketitle     
%
\section*{Introduction}\label{Sec:intro}     
    
Numerical algebraic geometry uses numerical analysis to study algebraic varieties.
Its foundations rest on numerical homotopy continuation, which enables
the numerical computation of solutions to systems of polynomial equations~\cite{SW05}.
It relies on the fundamental concept of a 
witness set~\cite{INAG,NAG}, which is a data structure for representing a
subvariety of affine or projective space on a computer.
Witness sets also appear in symbolic computation under the term lifting fiber~\cite{GH}.

A witness set for an irreducible variety $V$ of dimension $k$ is a
triple, $(F,\Lambda,W)$, where $F$ is a system of polynomial equations whose zero set contains
$V$ as a component and $\Lambda$ is a general linear space of codimension $k$ (represented by $k$ general
linear polynomials) which meets $V$ transversally in the finite set $W$ of points.
Numerical continuation of the points $W$ when $\Lambda$ is moved allows one to, for example,
sample points from $V$.
Consequently, $W$ may be considered to be a generic point of $V$ in the sense of Weil~\cite{Weil}.

A witness set for a subvariety also represents its fundamental cycle in homology.
The homology of projective space has a basis given by classes 
$[L]$ of linear spaces.
Since linear spaces satisfy duality---$L\cap\Lambda$ is a point when $L$
and $\Lambda$ are general linear spaces of complementary dimension---the homology class $[V]$ of
a subvariety $V$ of dimension $k$ is determined by its degree, which is the number of points in its intersection
with a general linear space $\Lambda$ of codimension $k$.
That is, if $L$ is a linear space of dimension $k$, then
\[
   [V]\ =\  \deg(V\cap\Lambda)\cdot [L]\,.
\]
In a witness set, we replace the number $\deg(V\cap\Lambda)$ by the set $W:=V\cap\Lambda$ and
require that the intersection be transverse, which we may, by Bertini's Theorem.

The concept of witness sets and their manipulation is linked to
ideas from intersection theory~\cite{Fu84a,Fu84b}.
A witness set $W$ is a concrete representation of the localized intersection product
$[V]\bullet[\Lambda]\in H_0 (V\cap\Lambda)$~\cite[Ch.~8]{Fu84a}.
As $W$ is a set of $\deg(V)$ points of $V$, we are implicitly working in the group of cycles modulo numerical
equivalence.
As a homotopy is a family of varieties (or points) over $\CC$, homotopies are connected to the notion
of rational equivalence.

We propose a notion of witness set for subvarieties of a smooth algebraic variety $X$,
based on ideas from intersection theory.
This requires an equivalence relation, such as numerical equivalence, on algebraic cycles such that the resulting group of
cycles on $X$  is a finitely generated free abelian group on which the intersection pairing is nondegenerate. 
Choosing an additive basis of cycles gives general witness sets for subvarieties of $X$.
With additional assumptions (see \S~\ref{S:Gwitness}) this notion is refined, and there are algorithms using general
witness sets such as changing a witness set, sampling, and membership testing.

Products of projective spaces satisfy these additional assumptions, and 
these ideas for such products were proposed in~\cite{HR15}.
These assumptions hold for flag manifolds, where the natural
general witness sets are Schubert witness sets. 
We explain how Schubert witness sets enable numerical continuation
algorithms for sampling and membership.

Numerical algebraic geometry operates on the geometric side of algebraic geometry, with
algorithms based on geometric constructions, such as fiber products~\cite{SW08}, images of
maps~\cite{HS10}, and monodromy~\cite[\S15.4]{SW05}.
It is also suited for intersection theory, using excess intersection formulas to compute Chern
numbers~\cite{DEPS}. 
Understanding witness sets in terms of intersection theory is a natural continuation.

This paper is organized as follows.
Section~\ref{S:NAG} gives background from numerical algebraic geometry, including numerical continuation,
witness sets, and some fundamental algorithms.
Section~\ref{S:Intersection} gives background from intersection theory
and explains the connection of rational equivalence to numerical homotopy continuation. 
We present general witness sets in Section~\ref{S:Gwitness}, and explain
how additional hypotheses enable algorithms for sampling and membership.
In Section~\ref{S:Schubert} we introduce Schubert witness sets,
which are the natural general witness sets for flag manifolds and explain 
the fundamental algorithms for Schubert witness sets.

%

\section{Classical witness sets}\label{S:NAG}

We review aspects of numerical algebraic geometry as may be found in~\cite{BertiniBook,SW05}.

\subsection{Homotopy continuation}\label{SS:homotopy}

A \demph{homotopy} is a polynomial map
 \begin{equation}\label{Eq:homotopy}
   H\ =\ H(x;t)\ \colon\ \CC^n\times\CC\ \longrightarrow\ \CC^N\,,
 \end{equation}
where $H^{-1}(0)\subset\CC^n\times\CC$ defines an algebraic curve \defcolor{$C$} with a dominant projection to
the distinguished ($t$) coordinate, $\CC$.
We suppose that $1$ is a regular value of the projection to $\CC$ and we know the points of the fiber, and
we use them to obtain the points of the fiber over $0$.

For example, suppose that $F=(f_1,\dotsc,f_n)$ with $f_i$ a polynomial of degree $d_i$.
Then the \demph{B\'ezout homotopy} 
\[
   H(x;t)\ :=\ (1-t)F\ +\ t( x_i^{d_i}{-}1\mid i=1,\dotsc,n)
\]
connects the points over $t=1$, $(x_1,\dotsc,x_n)$
where $x_i$ is a $d_i$-th root of unity, to the
unknown solutions to $F=0$.

Restricting $t$ to the interval $[0,1]\subset\CC$ (more generally to a path in $\CC$ connecting 1 to
0~\cite[\S~2.1]{BertiniBook}), the algebraic curve $C$ becomes a collection of real 
paths in $\CC^n\times[0,1]$ connecting points in the fiber at $t=1$ to those at $t=0$.
A point $(x,1)$ in the fiber of $C$ at $t=1$ lies on a unique path, and standard predictor-corrector
methods construct a sequence $(x_1,t_1),\dotsc,(x_s,t_s)$ of points along that path with $1>t_1>\dotsb>t_s=0$ so
that $x_s$ is a solution to $H(x,0)=0$.
This is illustrated in Figure~\ref{F:one}.
%
%
\begin{figure}[htb]

 \begin{picture}(420,125)
    \put(0,10){\includegraphics{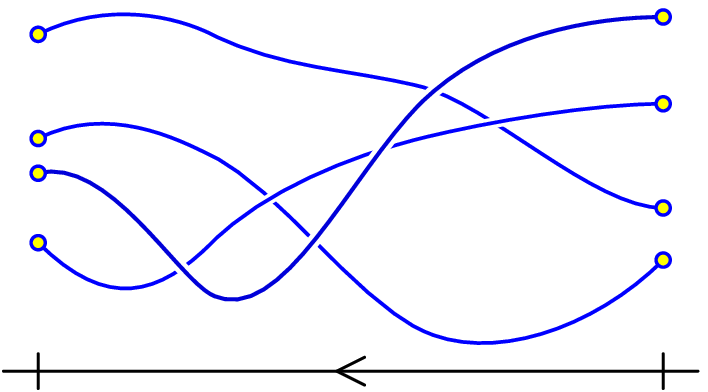}}
      \put(7,0){$0$} \put(189,0){$1$} \put(100,0){$t$}
       \put(90,115){$C|_{[0,1]}$}
    \put(220,10){\includegraphics{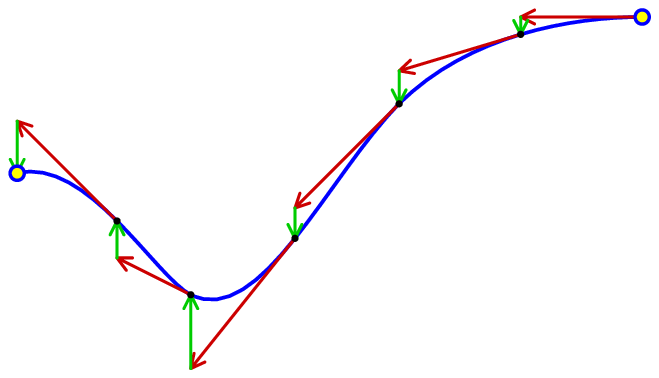}}
 \end{picture}

 \caption{Paths over $[0,1]$ and predictor-corrector steps.}\label{F:one}
\end{figure}

These numerical algorithms do not compute points on paths or on varieties, but rather refinable approximations to
such points.
This uses Newton's method which replaces a point $x\in\CC^n$ by the result of
a Newton step
\[
   \defcolor{N_F(x)}\ :=\ x\ -\ (DF(x))^{-1}(x)\,,
\]
where $F\colon \CC^n\to\CC^n$ is a polynomial map and $DF(x)$ is its Jacobian derivative.
When $x$ is sufficiently close to a solution $x^*$ of $F$, the sequence of iterations defined by $x_0:=x$
and $x_i=N_F(x_{i-1})$ for $i>0$ satisfies
\[
   \|x_i-x^*\|\ <\ \left(\frac{1}{2}\right)^{2^i-1}\|x-x^*\|\,.
\]
When this occurs, we say that $x$ \demph{converges quadratically} to $x^*$.
Smale's $\alpha$-theory~\cite{Smale86} involves a computable~\cite{alphaC} constant $\alpha(F,x)$ 
such that if $\alpha(F,x)<(13-3\sqrt{17})/4$, then $x$ converges quadratically to a solution.
Other approaches to certification (e.g.\ Krawcyzk's Method~\cite{Kraw,MJ77}) use interval arithmetic~\cite{MKRC09}.

We ignore the question of whether our approximations lie in the basin of quadratic convergence under
Newton iterations and simply refer to them as solutions, state that they lie on paths or on varieties, {\it et cetera}.

\subsection{Witness sets and algorithms}\label{SS:witness}
Algorithms based on numerical homotopy continuation can compute the isolated solutions to a system of
polynomial equations $F(x)=0$ and follow solutions along homotopy paths.
Sommese, Verschelde, and Wampler~\cite{INAG,NAG} introduced the notion of a witness set, which enables the representation
and manipulation of algebraic subvarieties of $\CC^n$ using numerical homotopy continuation.

Let $F\colon\CC^n\to\CC^N$ be a polynomial map and $V\subset\CC^n$ a union of components of $F^{-1}(0)$ of 
dimension $k$.
A \demph{witness set} for $V$ is a triple $(F,\Lambda,W)$ where $\Lambda\colon\CC^n\to\CC^k$ is $k$
general affine forms and $W=V\cap\Lambda^{-1}(0)$.
As $\Lambda$ is general, $W$ consists of $\deg(V)$ points and the intersection is transverse.

We may use a witness set $W$ to compute other witness sets.
If $\Lambda'$ is another set of $k$ independent affine forms, the convex combination
$\Lambda(t):= (1-t)\Lambda'+t\Lambda$ may be used with $F$ to define a homotopy that connects the points
$W$ at $t=1$ to points $W':=V\cap (\Lambda')^{-1}(0)$ at $t=0$.
Numerical continuation along this homotopy computes the points $W'$ (when finite) from the
points $W$.
When $\Lambda'$ is general, we obtain another witness set $(F,\Lambda',W')$ for $V$.

As every point of $V$ lies on some affine subspace of codimension $k$ which meets $V$ properly, continuation of a
witness set along such homotopies samples points of $V$.
Moreover, if $p\in\CC^n$ and we choose $\Lambda'$ such that $\Lambda'(p)=0$, but $\Lambda'$ is otherwise general, then
$p\in V$ if and only if $p\in W'$.
These three algorithms, moving a witness set, sampling points of a variety, and the membership test, are 
fundamental methods to study a variety $V$ given a witness set, and form the basis for more sophisticated
algorithms. 

\section{Intersection theory}\label{S:Intersection}

We recall aspects of algebraic cycles and intersection theory, and then discuss how rational equivalence leads to
homotopies as in \S\S\ref{SS:homotopy}. 
This material, with proofs is found in Chapters 1 and 19  of~\cite{Fu84a}.
Other sources include~\cite{EH_3264,Fu84b}.

Let $X$ be a smooth algebraic variety of dimension $n$.
If $V,\Lambda\subset X$ are subvarieties of dimensions $k$ and $l$ with $k{+}l\geq n$, then either $V\cap\Lambda$ is empty
or it has dimension at least $k{+}l{-}n$.
It is  \demph{proper} if it has this expected dimension.
The intersection  $V\cap\Lambda$ is \demph{transverse} at a point $p\in V\cap\Lambda$ when both $V$ and $\Lambda$ are
smooth at $p$ and their tangent spaces at $p$ span the tangent space of $X$ at $p$,
$T_pV+T_p\Lambda=T_pX$.
The intersection $Y\cap Z$ is \demph{generically transverse} if it is transverse at a dense set of points
$p\in V\cap\Lambda$. 
Generically transverse is necessary as any of $V$, $\Lambda$, or $V\cap\Lambda$ may have singular points.
Generically transverse intersections are proper.

\subsection{Intersection theories}\label{SS:intersection}

Let $X$ be a connected, complete, smooth, irreducible complex algebraic variety of dimension $n$.
For each $0\leq k\leq n$, the group \defcolor{$Z_kX$} of $k$-cycles on $X$ is the free
abelian group generated by the $k$-dimensional irreducible subvarieties of $X$. 
The  \demph{fundamental cycle $[V]$} of an irreducible subvariety $V$ of $X$ is the corresponding generator of 
$Z_k X$.
A subscheme $V\subset X$ of dimension $k$ also has a fundamental cycle.
For each irreducible component $\Lambda$ of $V$ of dimension $k$, let \defcolor{$m_{\Lambda,V}$} be its multiplicity in
$V$, which is the generic multiplicity of $V$ along $\Lambda$. 
The fundamental cycle of $V$ is 
\[
  \defcolor{[V]}\ :=\ \sum_\Lambda m_{\Lambda,V} [\Lambda]\,.
\]
A cycle $\sum \alpha_V[V]$ with $\alpha_V\geq 0$ is \demph{effective}.
If $\alpha_V\in\{0,1\}$, it is \demph{multiplicity-free}.
The fundamental cycle of a generically transverse intersection is multiplicity-free.
A map $\iota\colon Y\to X$ of varieties induces a map $\iota_*\colon Z_k Y\to Z_k X$.
When $\iota$ is an inclusion and $V\subset Y$ is a subscheme, $\iota_*[V]=[\iota(V)]$.

Sending a subvariety $V$ to its fundamental cycle in homology induces the \demph{cycle class map} 
$\defcolor{\cl}\colon Z_kX\to H_{2k}(X,\QQ)$.
Its kernel is the group \defcolor{$\Hom_k X$} of $k$-cycles with an integer multiple homologically equivalent to
zero and its image is the $k$-th \demph{algebraic homology $H_{k}^{\alg}X$} of $X$.
(The shift in homological degree from $2k$ to $k$ is for notational consistency.)
The group $H_{k}^{\alg}X$ is a finitely generated free abelian group.
As $X$ is smooth, homology has an intersection product which induces a bilinear map 
$H_{k}^{\alg}X\times H_{l}^{\alg}X\to H_{k+l-n}^{\alg}X$, where $(\alpha,\beta)\mapsto \alpha\cdot\beta$. 
When $k+l=n$, this  gives the \demph{intersection pairing} $H_{n-k}^{\alg}X\times H_{k}^{\alg}X\to H_{0}^{\alg}X=\ZZ$.

Suppose that $Y\subset X\times\PP^1$ is an irreducible subvariety of dimension $k{+}1$ with projections
$\iota$ to $X$ and $f$ to $\PP^1$,
 \begin{equation}\label{Eq:TwoProjections}
   \raisebox{-23pt}{\begin{picture}(84,46)(0,0)
     \put(19,39){$Y\subset X\times\PP^1$}
     \put(20,36){\vector(-1,-2){12.5}}  \put(24,36){\vector(1,-2){12.5}}
     \put( 7,22){$\iota$}     \put(33,22){$f$}
     \put( 0,0){$X$}  \put(33,0){$\PP^1$}
   \end{picture}}
 \end{equation}
%
%
%
where $f$ is surjective.
The fibers of $f$ are naturally subschemes of $X$ of dimension $k$.
Call the cycle
 \begin{equation}\label{Eq:ERE}
    [\iota(f^{-1}(0))]\ -\  [\iota(f^{-1}(1))]\ \in\ Z_k X
 \end{equation}
an \demph{elementary rational equivalence}.
Elementary rational equivalences generate the subgroup $\defcolor{\Rat_kX}\subset Z_kX$ of $k$-cycles
rationally equivalent to zero.
The quotient $\defcolor{A_kX}:=Z_kX/\Rat_kX$ is the $k$-th \demph{Chow group}  of $X$.
As $X$ is smooth, there is an intersection product as with homology.

Let $V$ and $\Lambda$ be subvarieties of $X$ of dimension $k$ and $l$.
The localized intersection product~\cite[Ch.~8]{Fu84a} of their fundamental cycles
is a cycle class
 \begin{equation}\label{Eq:LIP}
   \defcolor{[V]\bullet[\Lambda]}\ \in\ A_{k+l-n} (V\cap\Lambda)\,.
 \end{equation}
Its image in $A_{k+l-n} X$ under the map induced by the inclusion 
$V\cap\Lambda\hookrightarrow X$ is the intersection product $[V]\cdot[\Lambda]$.
When the intersection is proper, the localized intersection product is the
fundamental cycle of the scheme-theoretic intersection, $[V\cap\Lambda]$.

Let $\defcolor{\deg}\colon A_0X\to \ZZ$ be the degree map on 0-cycles 
\[
    \deg\ \colon\ \sum m_p [p]\ \longmapsto\ \sum m_p\,,
\]
the sum over $p\in X$.
Note that only finitely many coefficients $m_p$ are non-zero.
Composing with the product gives an intersection pairing $A_{n-k}X\times A_kX\to A_0X \to \ZZ$ as before.

If we replace $\PP^1$ by an irreducible curve $T$ and $0,1\in\PP^1$ by two smooth points of $T$ in the
definition of rational equivalence, we obtain \demph{algebraic equivalence}.
Let $\defcolor{\Alg_kX}\subset Z_k X$ be the group generated by differences of algebraically equivalent $k$-cycles,
the group of cycles algebraically equivalent to zero.
Let $\defcolor{B_kX}:=Z_kX/\Alg_kX$ be the group of cycles modulo algebraic equivalence.
This has an intersection product and pairing $B_{n-k}X\times B_{k}X \to B_0X=\ZZ$ as before.

A cycle $\beta\in Z_kX$ is \demph{numerically equivalent to zero} if, for every
$\alpha\in A_{n-k}X$, we have $\deg(\alpha\cdot \overline{\beta})=0$, where $\overline{\beta}$ is the image of $\beta$ in
$A_kX$. 
Let $\defcolor{\Num_kX}\subset Z_kX$ be the subgroup of $k$-cycles numerically equivalent to zero.
Set $\defcolor{N_kX}:=Z_kX/\Num_kX$, which is a finitely generated free abelian group.
The intersection pairing  is nondegenerate by the definition of numerical equivalence.

\begin{proposition}\label{P:intersectionTheory}
 For every $0\leq k\leq n$ we have 
 \begin{equation}\label{Eq:Cycle_relations}
   \Rat_k X\ \subset\ \Alg_kX\ \subset\ \Hom_kX\ \subset\ \Num_k X\,,
 \end{equation}
 as subgroups of $Z_kX$.
 The maps $A_kX\to B_kX\to H^\alg_kX\to N_kX$ are compatible with the intersection product.
 The groups $H^\alg_kX$ and $N_kX$ are finitely generated free abelian groups and the intersection pairing
 $N_{n-k} X \times N_{k}X\to\ZZ$ is nondegenerate.
\end{proposition}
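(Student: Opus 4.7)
The plan is to prove the four claims in the stated order, using at each stage the structure established at the previous one.

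First, the chain of inclusions in \eqref{Eq:Cycle_relations}. The inclusion $\Rat_k X \subset \Alg_k X$ is immediate, since $\PP^1$ together with the smooth points $0,1$ is a special case of the data defining algebraic equivalence. For $\Alg_k X \subset \Hom_k X$, I would use that the cycle class is constant on fibers of a dominant family $f\colon Y\to T$: over the dense open $U\subset T$ where $f$ is flat, the fundamental classes $[\iota(f^{-1}(t))]\in H_{2k}(X,\QQ)$ coincide by flatness, and by the specialization theorem (\cite[Ch.~10]{Fu84a}) this extends to every smooth point of $T$. Hence the difference of any two such fibers lies in $\Hom_k X$. Finally, $\Hom_k X\subset\Num_k X$ follows because the cycle class map is compatible with the intersection product and degree: if $\cl(\beta)=0$, then $\cl(\alpha\cdot\bar\beta)=\cl(\alpha)\cdot\cl(\beta)=0$ in $H_0(X,\QQ)$, which forces $\deg(\alpha\cdot\bar\beta)=0$ for every $\alpha\in A_{n-k}X$.

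Given the inclusions above, the compatibility of the intersection product with the sequence $A_kX\to B_kX\to H^\alg_kX\to N_kX$ reduces to checking at each step that the product descends to the quotient. For $A_kX\to B_kX$, if $Y\subset X\times T$ is a family and $\alpha\in A_lX$, then the product $\alpha\cdot Y\subset X\times T$ is itself a family whose fibers over smooth points of $T$ differ by algebraic equivalence; the compatibilities for $H^\alg$ and $N$ are then automatic from the analogous properties of the cycle class map and the definition of $\Num$.

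For the structural claims, note that $H^\alg_kX$ is the image of $\cl$, hence is contained in the image of the finitely generated group $H_{2k}(X,\ZZ)$ (finitely generated since $X$ is compact) inside $H_{2k}(X,\QQ)$. That image is the torsion-free quotient of $H_{2k}(X,\ZZ)$, a finitely generated free abelian group, so $H^\alg_kX$ is finitely generated and free. Since $\Hom_kX\subset\Num_kX$, the group $N_kX$ is a quotient of $H^\alg_kX$, hence finitely generated. Freeness of $N_kX$ is immediate from the definition: if $m\bar\beta=0$ in $N_kX$ with $m>0$, then for every $\alpha\in A_{n-k}X$ we have $m\cdot\deg(\alpha\cdot\bar\beta)=0$ in $\ZZ$, forcing $\deg(\alpha\cdot\bar\beta)=0$ and hence $\bar\beta=0$ in $N_kX$. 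Nondegeneracy of the pairing $N_{n-k}X\times N_kX\to\ZZ$ is tautological in the $N_kX$ slot by the definition of $\Num_kX$ (pulled through the surjection $A_{n-k}X\to N_{n-k}X$, which is legitimate because the product descends), and in the $N_{n-k}X$ slot by the symmetric version of the same definition.

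The main obstacle I anticipate is the middle inclusion $\Alg_kX\subset\Hom_kX$, which is the only place requiring nontrivial geometric input: one must control the behavior of fundamental classes in an algebraic family, particularly at non-flat or singular fibers of the parameter curve $T$. Every other step either is immediate from the definitions or rests on the rational equivalence theory developed in \cite[Ch.~1, 8]{Fu84a}.
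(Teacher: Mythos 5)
The paper offers no proof of this proposition; it refers the reader to Chapters 1 and 19 of Fulton's \emph{Intersection Theory}. Your argument is a correct reconstruction of the standard material there, and you rightly isolate $\Alg_kX\subset\Hom_kX$ as the one step requiring genuine geometric input; everything else (the trivial inclusion, $\cl$ being a ring map, finite generation from compactness of $X$, torsion-freeness of $N_kX$, and tautological nondegeneracy) goes through exactly as you say. One small imprecision in that middle step: local constancy of the fiber class is not really a consequence of flatness, nor is the Chow-theoretic specialization map of Ch.~10 the natural tool. The cleaner route is to note that the cycle $\iota(f^{-1}(p))$ has homology class $\iota_*\bigl([Y]\cdot f^*[p]\bigr)$, and since all points of the connected curve $T$ have the same class in $H_0(T,\QQ)$, these push-forwards agree for all $p$; this is what Fulton does in \S 19.3. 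The imprecision is cosmetic and does not affect the correctness of the proof.
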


Define \defcolor{$A_*X$} to be the direct sum of the $A_kX$ and the same for \defcolor{$B_*X$},
\defcolor{$H^\alg_*$}, and \defcolor{$N_*X$}.

\begin{remark}
 The first two inclusions in~\eqref{Eq:Cycle_relations} are strict in general.
 A conjectured equality of $\Hom_kX$ and $\Num_k X$ is a consequence of Grothendieck's `standard
 conjectures'~\cite[\S~5]{Kl94}. 
 The question of when the intersection pairing on $N_*X$ is perfect is related to the representability of integral
 homology classes by algebraic cycles.
 \hfill$\diamond$
\end{remark}

\subsection{Intersection theory  and homotopy continuation}

Elementary rational and algebraic equivalences give
homotopies in the sense of \S\S\ref{SS:homotopy}. 

Let $Y\subset X\times\PP^1$ be an irreducible subvariety of dimension $n{-}k{+}1$ having projections
$\iota$ to $X$ and $f$ to $\PP^1$ with $f$ surjective as in~\eqref{Eq:TwoProjections}.
This gives an elementary rational equivalence~\eqref{Eq:ERE} in $\Rat_{n-k}X$.
Suppose that $V\subset X$ has dimension $k$ and meets $\iota f^{-1}(1)$ transversally.
Then $(V\times\PP^1)\cap Y$ contains a curve $C$ passing through $V\cap \iota f^{-1}(1)$.
Writing $g$ for the restriction of $f$ to $C$ gives a surjective map $g\colon C\to\PP^1$.
Then $g^{-1}[0,1]$ gives arcs in $C$ connecting the points of $g^{-1}(1)$ to $g^{-1}(0)$ as in Figure~\ref{F:one}.
%
%
\begin{figure}[htb]
 \begin{picture}(390,115)
   \put(5,75){$X$}  \put(0,45){$f$}   \put(10,68){\vector(0,-1){40}}
   \put(5,11){$\PP^1$}
   \put(20,0){\begin{picture}(170,110)
     \put(0,10){\includegraphics{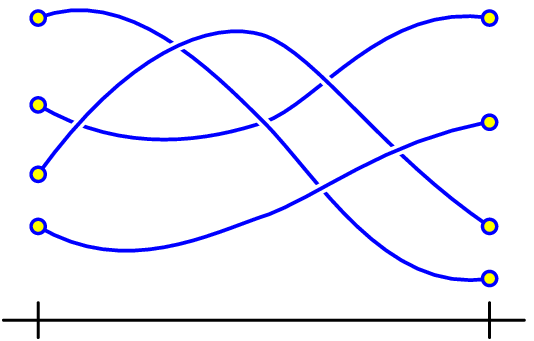}}
     \put(7,0){$0$} \put(139,0){$1$} 
     \put(79,101){$C|_{[0,1]}$}
   \end{picture}}
   \put(175,60){\vector(1,0){40}}
   \put(191,65){$\varphi$}
   \put(220,0){\begin{picture}(170,110)
     \put(0,10){\includegraphics{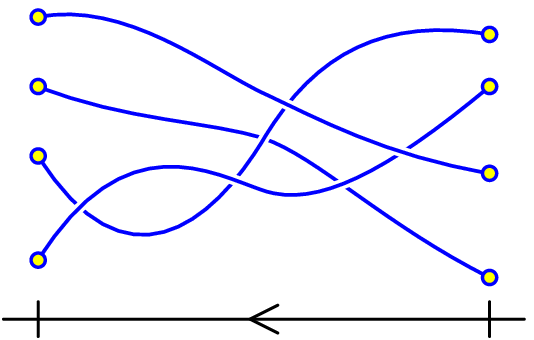}}
     \put(7,0){$0$} \put(139,0){$1$} \put(74,0){$t$}
     \put(64,99){$D|_{[0,1]}$}
   \end{picture}}
   \put(376,75){$\CC^d$}
   \put(381,68){\vector(0,-1){40}} \put(383,45){$h$}
   \put(376,11){$\CC$}
 \end{picture}
 \caption{An elementary rational equivalence defines a homotopy.}
 \label{F:ERE}
\end{figure}
Choosing  coordinates and equations for the varieties, we obtain a homotopy as in \S\S\ref{SS:homotopy}.

\begin{theorem}\label{Th:RatEquivHomotopy}
 Let $Y\subset X\times\PP^1$ give an elementary rational equivalence in $\Rat_{n-k}X$~\eqref{Eq:ERE} and
 $V\subset X$ be a subvariety of dimension $k$ meeting $\iota f^{-1}(1)$ transversally with $g$ the restriction of
 $f$ to the curve $C=(V\times\PP^1)\cap Y$. 
 Let $U\subset X$ be an affine open set containing $\iota g^{-1}[0,1]$.
 For any embedding $\varphi\colon U\to\CC^d$ there is a homotopy $H(x;t)$ defining a
 curve $D\subset\CC^d\times\CC$ with  $\varphi^{-1}(D)=C\cap(U\times\PP^1)$ and 
 $\varphi^{-1}(D|_{[0,1]})=g^{-1}[0,1]$.
\end{theorem}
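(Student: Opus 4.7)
The plan is to translate the picture directly into polynomial equations on an affine chart. I would fix the standard affine chart $\CC\subset\PP^1$ with coordinate $t$, which contains both $0$ and $1$. Since $U$ is affine, the embedding $\varphi\colon U\to\CC^d$ is a closed embedding, so the product
\[
   \Phi\ :=\ \varphi\times\id_\CC\ \colon\ U\times\CC\ \longrightarrow\ \CC^d\times\CC
\]
is a closed embedding with image $\varphi(U)\times\CC$.

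Next I would construct $D$ and extract $H$. The subvariety $C\cap(U\times\CC)$ is closed in $U\times\CC$ because $C$ is closed in $X\times\PP^1$, so $D:=\Phi(C\cap(U\times\CC))$ is a closed one-dimensional subvariety of $\CC^d\times\CC$. Its vanishing ideal in $\CC[x_1,\dotsc,x_d,t]$ is finitely generated by Hilbert's basis theorem; choosing generators $H_1,\dotsc,H_N$ and setting $H:=(H_1,\dotsc,H_N)$ yields a polynomial map $H\colon\CC^d\times\CC\to\CC^N$ with $H^{-1}(0)=D$ set-theoretically. To verify that $H$ qualifies as a homotopy in the sense of \S\ref{SS:homotopy}, I would note that under $\Phi^{-1}$ the projection $D\to\CC$ corresponds to $g$ restricted to the open subset $C\cap(U\times\CC)$ of the curve $C$. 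The hypothesis $\iota g^{-1}[0,1]\subset U$ forces $g^{-1}[0,1]\subset C\cap(U\times\CC)$, so the image of $D\to\CC$ contains $[0,1]$ and is therefore dense in $\CC$.

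The two preimage equalities then follow from $\Phi$ being bijective onto its image: $\Phi^{-1}(D)=C\cap(U\times\CC)$ by construction, and intersecting the $t$-coordinate with $[0,1]$ gives $\Phi^{-1}(D|_{[0,1]})=g^{-1}[0,1]$. The main obstacle, as I see it, is not mathematical depth but notational bookkeeping: one must interpret ``$\varphi^{-1}$'' as $\Phi^{-1}$ and read ``$C\cap(U\times\PP^1)$'' through the chosen affine chart, so that any points of $C$ lying over $\infty\in\PP^1$ are tacitly excluded on the right-hand side. Once these conventions are fixed, the proof reduces to Hilbert's basis theorem together with the surjectivity of $g$ asserted in the paragraph preceding the theorem statement, and the transversality hypothesis on $V\cap\iota f^{-1}(1)$ additionally ensures that $1$ is a regular value of the projection as required for homotopy continuation.
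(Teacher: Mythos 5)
Your proof is correct and takes the same route as the paper's: pass to an affine chart $\CC\subset\PP^1$ containing $[0,1]$, push $C\cap(U\times\CC)$ forward along $\varphi\times\id_\CC$, and take a generating set of the resulting ideal as the homotopy $H$. You supply several details the paper's proof glosses over---that $D$ is already closed when $\varphi$ is a closed embedding (so the paper's ``let $D$ be its closure'' is a no-op), that the projection $D\to\CC$ is dominant because its constructible image contains the infinite set $[0,1]$, and that the stated equality $\varphi^{-1}(D)=C\cap(U\times\PP^1)$ must be read through the affine chart so that points of $C$ over $\PP^1\setminus\CC$ are excluded---all of which are fair and careful readings of the statement.
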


\begin{proof}
 Let $\CC\subset\PP^1$ be an affine line containing $[0,1]$.
 Then the arcs $g^{-1}[0,1]$ lie in the curve $C^\circ:=C\cap(U\times\CC)$.
 Let $\varphi\colon U\to\CC^d$ be a map realizing $U$ as a subvariety of $\CC^d$.
 Then $\varphi\times\id_{\CC}$ realizes $C^\circ$ as a subvariety of $\CC^d\times\CC$.
 Let $D$ be its closure and $h\colon D\to\CC$ the projection map.
 Choosing any system of equations $H\colon\CC^d\times\CC\to\CC^N$ with $D=H^{-1}(0)$ gives a homotopy.
 See Figure~\ref{F:ERE}.
\end{proof}

\begin{remark}
 This leads to a numerical homotopy algorithm to find the points of $\iota g^{-1}(0)$, given those of 
 $\iota g^{-1}(1)$.
 Write $h\colon D\to\CC$ for the projection.
 As $\varphi \iota g^{-1}(1) = h^{-1}(1)$, we may use the homotopy to trace these points along the arcs of
 $h^{-1}[0,1]$ to obtain the points of $h^{-1}(0)$.
 Since $h^{-1}(0)=\varphi \iota g^{-1}(0)$, applying $\varphi^{-1}$ to  $h^{-1}(0)$ gives $\iota g^{-1}(0)$.
 \hfill$\diamond$
\end{remark}

\begin{remark}\label{R:AlgEquiv}
 Theorem~\ref{Th:RatEquivHomotopy} used rational equivalence as homotopy continuation assumes that 
 $t$ is rational ($t\in\CC$).
 Given an elementary algebraic equivalence, replace $\CC$ by a smooth affine curve
 $T$, the points $0$ and $1$ by points $p,q\in T$, and the interval $[0,1]$ by an arc $\gamma$ on $T$
 connecting $p$ to $q$.
 This gives arcs connecting points of $(V\times\PP^1)\cap Y$ above $p$ to points above $q$.
 Choosing coordinates $(\varphi)$ gives a homotopy $H(x;t)$, but the parameter $t$ is not
 rational, as it takes values in $\gamma\subset T$.
 This becomes a traditional homotopy by choosing a map $\psi\colon T\to\PP^1$
 with $\psi(p)=1$ and $\psi(q)=0$, and then the path $\gamma$ from $p$ to $q$ gives a path $\psi(\gamma)$
 between $1$ and $0$, which is followed in the homotopy.
 This is not a rational equivalence as only a subset of the points in a fiber $(\psi\circ\phi)^{-1}$ are followed along
 $\psi(\gamma)$ from 1 to 0 (these are the points above $\gamma\subset T$).
 \hfill$\diamond$
\end{remark}

\section{General witness sets}\label{S:Gwitness}

Let $X$ be an algebraic variety and fix \defcolor{$C_*$} to be an intersection theory as in
Proposition~\ref{P:intersectionTheory} such that $C_*X$ is a finitely generated free abelian group with 
nondegenerate intersection pairing.
A basis for $C_*X$ gives a normal form~\eqref{Eq:FundCyc} for a fundamental cycle $[V]$, leading to general witness 
sets.  
We discuss when general witness sets may be moved and may be used for sampling and membership.

\subsection{General witness sets}\label{SS:Gwitness}
The $k$th Betti number \defcolor{$b_k$} of $X$ is the rank of the free $\ZZ$-module $C_k X$.
While it has a $\ZZ$-basis of cycles $\alpha_1,\dotsc,\alpha_{b_k}\in Z_k X$, these need not be effective.
There are however, independent effective cycles $[L^{(k)}_1],\dotsc,[L^{(k)}_{b_k}]\in Z_k X$, with 
each $L^{(k)}_i$ an irreducible subvariety of dimension $k$.
These form a basis for the $\QQ$-vector space $C_k X\otimes_{\ZZ}\QQ$, called an \demph{effective $\QQ$-basis}. 
We work with a fixed choice of cycles $\{ L^{(a)}_b\}$ that form an effective $\QQ$-basis for $C_*X$. 

For a subvariety $V$ of $X$ of dimension $k$, there are rational numbers \defcolor{$c_j(V)$}
for $j=1,\dotsc,b_k$ defined by the expansion of the fundamental cycle of $V$ in this basis, 
 \begin{equation}\label{Eq:FundCyc}
   [V]\ =\ \sum_{j=1}^{b_k} c_j(V) [L^{(k)}_j]\,.
 \end{equation}
The intersection pairing on $C_{n-k}X \times C_{k}X$ is encoded by the $b_{n-k}\times b_{k}$ integer matrix $M^{(k)}$
whose entries are
 \begin{equation}\label{Eq:Intmatrix}
     \defcolor{M^{(k)}_{i,j}}\ :=\ \deg \bigl( [L^{(n-k)}_i] \cdot [L^{(k)}_j]\bigr)\,,
 \end{equation}
where $i=1,\dotsc,b_{n-k}$ and $j=1,\dotsc,b_k$.
As the intersection pairing is nondegenerate, $b_{n-k}=b_k$ and  $M^{(k)}$ is invertible.

Consequently, if $\defcolor{c(V)}:=(c_j(V)\mid j=1,\dotsc,b_k)^T$ is the vector of coefficients in the
representation~\eqref{Eq:FundCyc} of $V$, then the vector of intersection multiplicities,
\[
    \defcolor{d(V)}\ :=\ \bigl(\deg([V]\cdot[L^{(n-k)}_1]),\dotsc,\deg([V]\cdot[L^{(n-k)}_{b_{n-k}}])\bigr)^T\,,
\]
satisfies $d(V)= M^{(k)}c(V)$, and so we may recover the class~\eqref{Eq:FundCyc} of $V$ from these intersection
multiplicities by inverting this relation, $c(V)= (M^{(k)})^{-1}d(V)$.

As $\dim V+ \dim L^{(n-k)}_i=\dim X$, the product $[V]\cdot[L^{(n-k)}_i]$ is the image in $C_0 X$ of the
localized product $[V]\bullet[L^{(n-k)}_i]$ in $C_0(V\cap L^{(n-k)}_i)$.
This in turn is the image of the localized intersection product~\eqref{Eq:LIP} in $A_0(V\cap L^{(n-k)}_i)$ under the map
$A_*\to C_*$ of Proposition~\ref{P:intersectionTheory}.
When the intersection is proper (has dimension 0), $[V]\bullet[L^{(n-k)}_i]$ is the fundamental cycle
$[V\cap L^{(n-k)}_i]$ of the intersection, which is
\[
  \sum_{p\in V\cap L^{(n-k)}_i} m_p\, p\,,
\]
where $m_p$ is the intersection multiplicity of $V\cap L^{(n-k)}_i$ at $p$.

\begin{definition}
 Let $V\subset X$ be a subvariety of dimension $k$.
 A \demph{general witness set} for $V$ is a triple 
 \defcolor{$(V,\Lambda_\bullet,W_\bullet)$}, where 
 $\defcolor{\Lambda_\bullet}=(\Lambda_1,\dotsc,\Lambda_{b_{n-k}})$ are subvarieties of $X$ such that
 $[\Lambda_i]=[L^{(n-k)}_i]$ and $\defcolor{W_\bullet}=(W_1,\dotsc,W_{b_{n-k}})$ are cycles such that
 $\defcolor{W_i}\in Z_0(V\cap\Lambda_i)$ represents the localized product $[V]\bullet[\Lambda_i]$.
 We call each component $W_i$ a general witness set. \hfill$\diamond$
\end{definition}

By the preceeding discussion, general witness sets encode fundamental cycles.

\begin{theorem}
 Suppose that $(V,\Lambda_\bullet,W_\bullet)$ is a general witness set for $V$.
 The vector $c(V)$ of coefficients of\/ $[V]$ in the basis $[L_j^{(k)}]$  of $C_k X$ is obtained from the vector
 $\defcolor{\deg(W_\bullet)}:=(\deg(W_1),\dotsc,\deg(W_{b_{n-k}}))^T$ of the degrees of the $W_i$ 
 by the formula $c(V)=(M^{(k)})^{-1}\deg(W_\bullet)$.
\end{theorem}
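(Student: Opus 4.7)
The plan is to stitch together three facts that were already assembled in the paragraphs leading up to the definition of general witness set, and then invert a matrix equation.

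First, I would unpack what each quantity means. By definition of $c(V)$, the fundamental cycle satisfies $[V]=\sum_j c_j(V)[L^{(k)}_j]$ in $C_k X$. Intersecting with $[L^{(n-k)}_i]$ and taking degree, the bilinearity of the intersection pairing together with the definition~\eqref{Eq:Intmatrix} of $M^{(k)}$ gives
\[
   \deg\bigl([V]\cdot[L^{(n-k)}_i]\bigr)\ =\ \sum_{j=1}^{b_k} M^{(k)}_{i,j}\, c_j(V)\,,
\]
that is, $d(V)=M^{(k)}c(V)$. Since the intersection pairing is nondegenerate and $b_{n-k}=b_k$, the matrix $M^{(k)}$ is invertible, so $c(V)=(M^{(k)})^{-1}d(V)$.

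Next, I would identify $d(V)$ with $\deg(W_\bullet)$, which is the only place the hypotheses on the witness set enter. By the definition of a general witness set, $W_i$ is a representative in $Z_0(V\cap\Lambda_i)$ of the localized intersection product $[V]\bullet[\Lambda_i]\in C_0(V\cap\Lambda_i)$. The image of this localized class in $C_0 X$ under the map induced by the inclusion $V\cap\Lambda_i\hookrightarrow X$ is, as recalled in \S\ref{SS:intersection}, exactly the product $[V]\cdot[\Lambda_i]$. Because the degree map $C_0X\to\ZZ$ factors through pushforward along $V\cap\Lambda_i\hookrightarrow X$ (the degree of a $0$-cycle is the sum of its coefficients, and pushforward by a closed immersion preserves these coefficients), $\deg(W_i)=\deg([V]\cdot[\Lambda_i])$. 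Finally, $[\Lambda_i]=[L^{(n-k)}_i]$ by hypothesis, so $\deg(W_i)=\deg([V]\cdot[L^{(n-k)}_i])=d(V)_i$, and therefore $\deg(W_\bullet)=d(V)$.

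Combining the two pieces yields $c(V)=(M^{(k)})^{-1}\deg(W_\bullet)$, which is the claim. There is no real obstacle here: the only step that requires care is justifying that $\deg(W_i)$ equals the global intersection number $\deg([V]\cdot[L^{(n-k)}_i])$ rather than merely the localized class, and this follows from the compatibility of the localized product with the global product via pushforward along the inclusion of $V\cap\Lambda_i$, which the paper has already recorded.
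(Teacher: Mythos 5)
Your proof is correct and follows essentially the same route as the paper: the two halves you stitch together—that $d(V)=M^{(k)}c(V)$ implies $c(V)=(M^{(k)})^{-1}d(V)$, and that $\deg(W_i)=\deg([V]\cdot[L^{(n-k)}_i])$ via compatibility of the localized product with pushforward—are precisely the observations the paper records in the paragraphs preceding the theorem (which it summarizes with ``By the preceding discussion\ldots''). You have simply made that implicit argument explicit, which is what a proof should do.
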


\begin{example}
  The cycles $W_i$ are not necessarily effective.
  If $X:={\textit Bl}_p\PP^2$, the blow up of $\PP^2$ in a point $p$, then $C_1X=[\ell]\ZZ+[E]\ZZ$ (this holds in any
  intersection theory), where $\ell$ is the
  proper transform of a line in $\PP^2$ and $E$ is the exceptional divisor.
  In this case, $M^{(1)}=(\begin{smallmatrix}1&0\\0&-1\end{smallmatrix})$ as $[\ell]^2=1$, $[\ell]\cdot[E]=0$, and
  $[E]^2=-1$.
  A general witness set for $E$ is $W_\bullet=(0,-[q])$, where $q\in E$.\hfill$\diamond$
\end{example}

\begin{example}
 Projective space $\PP^n$ has free abelian Chow groups $A_*\PP^n$.
 Here, $b_k=1$ for $0\leq k\leq n$ and $L^{(k)}$ is any $k$-dimensional linear subspace ($k$-plane).
 By Bertini's Theorem, a general $(n{-}k)$-plane $\Lambda$ meets a $k$-dimensional subvariety $V$ of $\PP^n$
 transversally in $\deg(V)$ points $W=V\cap\Lambda$. 
 Thus classical witness sets are general witness sets.\hfill$\diamond$
\end{example}

Chow groups are not typically finitely generated free a\-be\-li\-an groups with a
nondegenerate intersection pairing---e.g.\ if $\calE$ is an elliptic curve, then $A_0\calE=\calE\times\ZZ$ and
$A_1\calE=\ZZ$. 
(This is remedied for $\calE$ by algebraic equivalence as $B_0\calE=B_1\calE=\ZZ$.)
Nevertheless, for many common varieties $X$, rational equivalence and numerical equivalence coincide.
A sufficient condition is that $X$ admits an action of a solvable linear algebraic group with finitely many
orbits~\cite{FMSS}. 
This class of varieties includes projective space, toric varieties, Grassmannians, flag manifolds, and 
spherical varieties.
If $X$ is such a space and $Y$ any variety, then there is a K\"unneth isomorphism
$A_*X\otimes A_*Y\xrightarrow{\,\sim\,}A_*(X\times Y)$, so products with these spaces preserve these
properties.

\begin{example}
 Hauenstein and Rodriguez~\cite{HR15} developed multiprojective witness sets for subvarieties of products of projective
 spaces, which are general witness sets for these varieties.
 Let $m,n\geq 1$.
 The Chow group of $\PP^m\times\PP^n$ is a free abelian group that is isomorphic to its cohomology.
 To describe a basis, for each $0\leq a\leq m$ and $0\leq b\leq n$, let $K_a\subset\PP^m$ and $L_b\subset\PP^n$ be
 linear subspaces of dimensions $a$ and $b$, respectively.
 The classes $[K_a]\otimes[L_b]=[K_a\times L_b]$ with $a+b=k$ form a basis for  $A_k(\PP^m\times\PP^n)$.

 A subvariety $V\subset\PP^m\times\PP^n$ of dimension $k$ has \demph{bidegrees} $\defcolor{d_{a,b}}=d_{a,b}(V)$ for
 $a+b=k$ defined by 
\[
    [V]\ =\ \sum_{a+b=k} d_{a,b}\, [K_a\times L_b]\,,
\]
 where $0\leq a\leq m$ and $0\leq b\leq n$.
 A \demph{multihomogeneous witness set} for $V$ is a triple $(V,\Lambda_\bullet,W_\bullet)$ where 
\begin{enumerate}
  \item[(i)]
    For each $(a,b)$ with $a+b=k$, $\Lambda_{a,b}=M^a\times N^b$ where $M^a\subset\PP^m$ and $N^b\subset\PP^n$ are
    linear subspaces of codimension $a$ and $b$, respectively, such that 

  \item[(ii)] $W_{a,b}:=V\cap\Lambda_{a,b}$ is transverse and therefore consists of $d_{a,b}$ points, and

  \item[(iii)] $\Lambda_\bullet=\{\Lambda_{a,b}\mid a+b=k\}$ and $W_\bullet=\{W_{a,b}\mid a+b=k\}$.
\end{enumerate}

 Hauenstein and Rodriguez enrich this structure by representing $V$ as a component of the
 solution set of a system of bihomogeneous polynomials and the linear subspaces $M^a$ and $N^b$ by general linear
 forms on their ambient projective spaces.
 They give algorithms based on multihomogeneous witness sets for moving a witness set, membership, sampling, regeneration, 
 and numerical irreducible decomposition using a trace test~\cite{SVW}.
 An alternative trace test for multihomogeneous witness sets is developed in~\cite{LRS}, and extensions to
 more than two factors are given in~\cite{toolkit}. \hfill$\diamond$
\end{example}

\subsection{Moving, sampling and membership}\label{S:SamplingMembership}

While general witness sets provide a representation of a cycle class $[V]$, without further assumption, their utility is
limited. 
We first describe how rational or algebraic equivalence allows a general witness set to be moved, and then discuss
conditions on subvarieties $\Lambda_i$ in an effective $\QQ$-basis that allow sampling and a membership test.
The moving lemma is essential for actual computations.

If $(V,\Lambda_\bullet,W_\bullet)$ is a general witness set for a subvariety $V\subset X$ of dimension $k$ and
$V\cap\Lambda_i$ is transverse, then we may move the general witness set $W_i$ using any elementary rational or algebraic
equivalence involving $\Lambda_i$.

\begin{theorem} \label{Th:MoveWitnessSet}
 Suppose that $\Lambda_i$ is an effective cycle with $[\Lambda_i]=[L_i^{(n-k)}]$ in $C_*X$ that meets a subvariety $V$
 transversally in a general witness set $W_i=V\cap\Lambda_i$.
 For any elementary rational equivalence $[\Lambda_i]-[\Lambda'_i]\in\Rat_{n-k}X$,
 homotopy continuation of $W_i$ along this rational equivalence as in Theorem~\ref{Th:RatEquivHomotopy}
 computes a general witness set $W'_i\in C_0(V\cap\Lambda'_i)$.
\end{theorem}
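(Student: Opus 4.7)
My plan is to reduce the theorem to Theorem~\ref{Th:RatEquivHomotopy} and then identify the endpoints of the resulting homotopy with the localized intersection product $[V]\bullet[\Lambda'_i]$.

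First, I would fix an irreducible $Y\subset X\times\PP^1$ representing the elementary rational equivalence, with projections $\iota$ and $f$ as in~\eqref{Eq:TwoProjections}, so that $\iota_*[f^{-1}(1)]=[\Lambda_i]$ and $\iota_*[f^{-1}(0)]=[\Lambda'_i]$ in $Z_{n-k}X$. Set $C := (V\times\PP^1)\cap Y$ and $g := f|_C$. The dimension count gives $\dim C = 1$ at the loci where $V\times\PP^1$ and $Y$ meet properly, and transversality of $V\cap\Lambda_i$ ensures that $g^{-1}(1)$ is the reduced $0$-cycle $W_i\times\{1\}$, so that $1$ is a regular value of $g$ on each branch of $C$ through $W_i\times\{1\}$. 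Theorem~\ref{Th:RatEquivHomotopy} then produces a homotopy $H(x;t)$ whose solution curve $D$ corresponds, via an embedding $\varphi$, to the arcs $g^{-1}[0,1]$ joining $W_i\times\{1\}$ to the fiber over $0$. Numerical path tracking starting from $W_i$ traces these arcs and delivers, at $t=0$, a $0$-cycle $W'_i = \iota_*[g^{-1}(0)]\in Z_0(V\cap\Lambda'_i)$, where the multiplicity of an endpoint $q$ is the number of incoming paths, which equals the length of the scheme $g^{-1}(0)$ over the preimages of $q$ in $C$.

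The remaining step, which I view as the main obstacle, is to verify that $W'_i$ represents the localized intersection product $[V]\bullet[\Lambda'_i]$ in $C_0(V\cap\Lambda'_i)$. Since $C$ is a curve dominating $\PP^1$, the map $g$ is flat over a neighborhood of $0$, so $\{[g^{-1}(t)]\}$ forms an algebraic specialization of $0$-cycles on $C$; for generic $t$ the intersection is proper and transverse, so $\iota_*[g^{-1}(t)]$ represents the proper product $[V]\bullet[\iota(f^{-1}(t))]$, and the specialization principle for refined intersection products (Chapters~6 and~10 of~\cite{Fu84a}) propagates this identification to $t=0$, yielding $\iota_*[g^{-1}(0)] = [V]\bullet[\Lambda'_i]$. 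Secondary issues to address are the case when $\Lambda_i$ or $\Lambda'_i$ is reducible rather than irreducible (which can be treated component by component, or by noting that $Z_0(V\cap\Lambda'_i)$ is defined uniformly), possible non-reducedness of $\iota(f^{-1}(1))$ away from $W_i$ (harmless after restricting to a neighborhood of the arcs leaving $W_i\times\{1\}$), and ruling out runaway paths as $t\to 0$ — the latter being automatic since $X$ is complete and $C$ is closed in $X\times\PP^1$, so every arc of $g^{-1}[0,1]$ has a limit in $g^{-1}(0)$.
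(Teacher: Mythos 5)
Your proof is correct and follows essentially the same route as the paper: apply Theorem~\ref{Th:RatEquivHomotopy} to the elementary rational equivalence, track paths from $W_i$, and read off the endpoints with multiplicities at $t=0$. The paper's own proof is terser---it invokes endgames to compute the endpoints and their multiplicities in the non-transverse case and simply asserts that the resulting cycle represents $[V]\bullet[\Lambda'_i]$---whereas you usefully spell out the justification for that assertion via flatness and the specialization principle for families of cycles in Fulton, a detail the paper takes for granted.
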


\begin{proof}
 Suppose that $Y\subset X\times\PP^1$ as in~\eqref{Eq:TwoProjections} gives an
 elementary rational equivalence $[\Lambda_i]-[\Lambda'_i]\in\Rat_{n-k}X$ (so that $\Lambda_i=\iota f^{-1}(1)$ and
 $\Lambda'_i=\iota f^{-1}(0)$).
 Since $V$ meets $\Lambda_i$ transversally in $W_i$, by Theorem~\ref{Th:RatEquivHomotopy}, there is a homotopy
 connecting $W_i$ with points on $V\cap \Lambda'_i$.
 If $V\cap \Lambda'_i$ is transverse, then numerical homotopy continuation may be used
 to compute the points $W'_i=V\cap \Lambda'_i$.
 If it is not transverse, so that homotopy paths become singular at $t=0$, then endgames~\cite{BHS11,HV98}
 may be used to compute the endpoints of the paths and the corresponding multiplicities.
 These points and multiplicities give a cycle $W_i'\in Z_0(V\cap \Lambda'_i)$ representing
 $[V]\bullet[\Lambda'_i]$.
\end{proof}

\begin{remark}  
 By Remark~\ref{R:AlgEquiv}, an elementary algebraic equivalence also gives a homotopy.\hfill$\diamond$
\end{remark}

The exceptional divisor $E$ of $X={\textit Bl}_p\PP^2$ does not move.
Thus it may not be possible to move a generator $L_i$ of
$C_{n-k}X$ in a rational or algebraic family, and thus move a general witness set $W_i$ for a subvariety $V$ of $X$.
Even when a generator $L_i$ moves, it may not move with sufficient freedom.

While $E\subset{\textit Bl}_p\PP^2$ does not move, the proper transform $\ell$ of a line moves fairly freely.
For any curve $C\subset{\textit Bl}_p\PP^2$ and any smooth point $x$ of $C$ with $x\not\in E$, there is a proper transform
$\ell'$ of a line in $\PP^2$ (so $[\ell]-[\ell']$ is an elementary rational equivalence) that contains $x$ and meets
$C\smallsetminus E$ transversally.
This suggests the following definition.

\begin{definition}
  A subvariety $\Lambda$ of $X$ satisfies the \demph{moving lem\-ma} with respect to a dense open subset $U$ of $X$ if
  for any subvariety $V$ of $X$ and smooth point $x\in V\cap U$, there is a subvariety $\Lambda'$ of $X$ containing
  $x$ with $V\cap\Lambda'$ transverse in $U$ and $[\Lambda]-[\Lambda']$ is an elementary rational
  equivalence.\hfill$\diamond$ 
\end{definition}

\begin{remark}
 Suppose that a member $L_i^{(n-k)}$ of an effective $\QQ$-basis for $C_{n-k}X$ satisfies the moving lemma with respect
 to $U$.
 Given a general witness set $W_i=V\cap L_i^{(n-k)}$, the algorithm of Theorem~\ref{Th:MoveWitnessSet} for
 moving $W_i$ may be used to sample points of $V\cap U$ and test membership in $V$ for points $x\in U$.\hfill$\diamond$
\end{remark}

It is always possible to choose an effective $\QQ$-basis for $C_{n-k}X$ with one member satisfying a generic moving lemma.

\begin{proposition}
  Let $X$ be a smooth variety and $V\subset X$ any affine open subset.
  Then there is a dense open subset $U\subset V$ such that 
  for every $k$ with $1\leq k\leq n=\dim X$, there exists a subvariety $\Lambda$ of $X$ of dimension $k$ that
  satisfies the moving lemma with respect to $U$.
\end{proposition}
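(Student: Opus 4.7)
The plan is to construct $\Lambda$ as the closure in $X$ of a generic affine linear slice of $V$ obtained from a closed embedding $V\hookrightarrow\CC^d$, and to realize the required elementary rational equivalence through a pencil of affine subspaces.

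Since $V$ is affine, fix a closed embedding $\varphi\colon V\hookrightarrow\CC^d$ with $d\geq n$, and take $U:=V$, which is smooth and dense in $X$. For each $k\in\{1,\dots,n\}$, let $\mathcal{L}_k$ denote the space of affine linear subspaces of $\CC^d$ of codimension $n-k$; this is an irreducible rational variety (an open subset of a Grassmannian). By iterated Bertini---valid because $k\geq 1$, so each successive linear section of $\varphi(V)$ retains positive dimension and remains irreducible---a general $L_0\in\mathcal{L}_k$ meets $\varphi(V)$ in a smooth irreducible $k$-dimensional subvariety. Define $\Lambda$ to be the closure in $X$ of $\varphi^{-1}(L_0\cap\varphi(V))$, which is an irreducible $k$-dimensional subvariety.

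To verify the moving lemma, fix a subvariety $W\subset X$ and a smooth point $x\in W\cap U$. The subvariety $\mathcal{L}_{k,x}\subset\mathcal{L}_k$ of affine subspaces through $\varphi(x)$ is irreducible and homogeneous under the subgroup of the affine group of $\CC^d$ stabilizing $\varphi(x)$. Kleiman's transversality theorem (in characteristic zero) applied to this transitive action yields that for a general $L'\in\mathcal{L}_{k,x}$ the intersection $L'\cap\varphi(W\cap V)$ is generically transverse in $\varphi(V)$ and, since $\varphi(x)$ is a smooth point of each locus, transverse at $\varphi(x)$. Setting $\Lambda':=\overline{\varphi^{-1}(L'\cap\varphi(V))}^X$ produces a $k$-dimensional subvariety of $X$ containing $x$ with $W\cap\Lambda'$ transverse in $U$.

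To exhibit $[\Lambda]-[\Lambda']$ as an elementary rational equivalence, I choose a morphism $\psi\colon\PP^1\to\mathcal{L}_k$ with $\psi(0)=L_0$ and $\psi(1)=L'$, which exists because $\mathcal{L}_k$ is open in a rational, hence rationally connected, Grassmannian. Let $Y\subset X\times\PP^1$ be the closure of the incidence locus $\{(p,t)\in V\times\PP^1:\varphi(p)\in\psi(t)\}$. Bertini applied along the pencil shows the generic fiber of $Y\to\PP^1$ is irreducible of dimension $k$, so $Y$ is irreducible of dimension $k+1$ and projects surjectively to $\PP^1$. Choosing $\psi$ generically among $\PP^1$'s joining $L_0$ to $L'$ arranges that the projection $f\colon Y\to\PP^1$ is flat near $0$ and $1$, whence the scheme-theoretic fibers $f^{-1}(0)$ and $f^{-1}(1)$ have fundamental cycles $[\Lambda]$ and $[\Lambda']$, yielding the required equivalence in $\Rat_k X$.

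I expect the main obstacle to be this last step: ensuring simultaneously that $Y$ is irreducible and that its fibers over $0$ and $1$ are exactly $\Lambda$ and $\Lambda'$, without degeneration or embedded components. Irreducibility reduces via Bertini to irreducibility of the generic member of the pencil, which uses $k\geq 1$. Correct identification of the endpoint fibers requires a generic choice of the $\PP^1$-family within the parameter space of pencils joining $L_0$ to $L'$ in $\mathcal{L}_k$, compatible with the earlier genericity of $L_0$ and $L'$.
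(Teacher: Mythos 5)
Your construction takes a genuinely different route from the paper's. The paper's proof uses Noether normalization to produce a \emph{finite} map $\pi\colon V\to{\mathbb A}^n$, takes $U$ to be the unramified locus of $\pi$, and defines $\Lambda$ as the closure of $\pi^{-1}(L)$ for a $k$-plane $L\subset{\mathbb A}^n$; the pencil of $k$-planes in ${\mathbb A}^n$ then supplies the elementary rational equivalence. You instead use a closed embedding $\varphi\colon V\hookrightarrow\CC^d$, take $U=V$, and slice by affine subspaces of codimension $n-k$ in $\CC^d$. Both are reasonable, and both implicitly lean on the same sort of endpoint analysis for the incidence family $Y\subset X\times\PP^1$. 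What the paper's choice buys is that a finite map is proper and behaves cleanly under base change, which makes the endpoint cycles easier to control and explains why the author is content to restrict $U$ to the unramified locus; what your choice buys is that $\varphi$ is an embedding, so $\varphi^{-1}(L\cap\varphi(V))\cong L\cap\varphi(V)$ and no ramification issues arise, which is why you can hope to take $U=V$.

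Two places in your argument need to be tightened. First, the invocation of Kleiman is not quite as you state it: the stabilizer of $\varphi(x)$ in the affine group does \emph{not} act transitively on $\CC^d$ (it fixes $\varphi(x)$), so Kleiman only gives generic transversality of $L'\cap\varphi(W\cap V)$ away from $\varphi(x)$. Transversality \emph{at} $\varphi(x)$ is not a consequence of generic transversality plus smoothness of the loci there; you need the separate, elementary observation that as $L'$ ranges over affine subspaces of fixed dimension through $\varphi(x)$, the tangent space $T_{\varphi(x)}L'\cap T_{\varphi(x)}\varphi(V)$ sweeps out every $k$-plane in $T_{\varphi(x)}\varphi(V)$, so a general choice is transverse to $d\varphi(T_xW)$ when $k+\dim W\geq n$. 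Second, as you yourself flag, the elementary rational equivalence requires that the \emph{cycle} $[\iota f^{-1}(0)]$ equal $[\Lambda]$ exactly, with no extra components supported on $X\setminus V$ and no multiplicities; flatness of $Y\to\PP^1$ is automatic (any dominant morphism from an integral scheme to a smooth curve is flat) and does not by itself rule out boundary components of $f^{-1}(0)$ lying in $X\setminus V$. This is a real gap in your write-up, though it is comparable to what the paper leaves implicit in the phrase ``form a family of rationally equivalent subvarieties.''
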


\begin{proof}
  Let $\pi\colon V\to{\mathbb A}^n$ be a finite map given by Noether normalization
  and $U\subset V$ be the set of points $x$ where $d_x\pi$ is unrammified.
  Then the inverse images $\pi^{-1}(L)$ of affine $k$-planes $L$ in ${\mathbb A}^n$ form a family of rationally
  equivalent subvarieties which satisfy the moving lemma with respect to $U$. 
\end{proof}


\section{Schubert witness sets}\label{S:Schubert}


While an elementary rational equivalence gives rise to homotopies (Theorem~\ref{Th:RatEquivHomotopy}), the Chow
ring $A_*X$ of cycles on $X$ modulo rational quivalence does not typically satisfy hypotheses which allow general
witness sets as in Section~\ref{S:Gwitness}.
Even when $A_*X$ satisfies these hypotheses, a general witness set $W_i$ might not be an effective cycle or it might not
be possible to use $W_i$ for sampling or testing membership, even generically on an open subset $U\subset X$.

Nevertheless, for the important class of flag varieties, the theory of witness sets for subvarieties of
projective spaces extends optimally.
Flag varieties include projective spaces, Grassmannians, and products thereof.
The Chow ring of a flag variety has an integer basis of effective Schubert cycles, which are the fundamental
classes of Schubert varieties (defined below), and the intersection pairing is nondegenerate.
Consequently, subvarieties of $X$ have general witness sets.
Also, the intersection matrix $M^{k}$~\eqref{Eq:Intmatrix} is a permutation matrix, 
implying that the coefficients~\eqref{Eq:FundCyc} are positive integers.
Finally, each Schubert variety satisfies the moving lemma on the whole flag variety.
We expain all this below.

There is a well-known classification of flag varieties~\cite{Brion}.
Let \defcolor{$G$} be a semisimple reductive algebraic group, such as $SL_m\CC$, a symplectic or complex orthogonal group,
or a product of such groups.
A \defcolor{flag variety} for $G$ is a compact homogeneous space for $G$.
It has the form $G/P$ for $P$ a subgroup of $G$ containing a maximal solvable (Borel) subgroup $B$ of $G$.
The orbit of $B$ (or of any conjugate of $B$) on $G/P$ gives an algebraic cell decomposition of $G/P$.
Closures of these cells are Schubert varieties whose fundamental cycles give a $\ZZ$-basis for the Chow ring
$A_*G/P$.
This has a detailed combinatorial structure, which may be found in~\cite{Brion} or in~\cite{Fu97}, the latter for
$G=SL_m\CC$.
We summarize its salient features, which imply that the natural
general witness sets for flag varieties---Schubert witness sets---have the optimal properties of classical witness sets.
We describe them for the Grassmannian of lines in $\PP^4$, and show how to determine a Schubert witness set for the 
set of lines on a quadric $\PP^4$.

A \demph{partially ordered set} (\demph{poset}) is a set  $S$ with a binary relation $\leq$ that is
reflexive, antisymmetric, and transitive.
If $S$ has a minimal and a maximal element, and all maximal chains in $S$ have the same length, then $S$ is \demph{ranked}.
The rank \defcolor{$\rk(\alpha)$} of an element $\alpha\in S$ is the number of elements below $\alpha$ in any maximal chain
containing $\alpha$ and the rank of $S$ is the rank of its maximal element.
Write \defcolor{$S_k$} for the set of elements of $S$ of rank $k$.

We summarize some of the structure of a flag variety.
Proofs are found in~\cite{Brion,Fu97}.

\begin{proposition}\label{S:GmodS}
  For a flag variety $G/P$ of dimension $n$, rational equivalence coincides with numerical equivalence,
  and we have the following. 
 \begin{enumerate}
  \item[(i)] $G/P$ has an algebraic cell decomposition,
   \[
    G/P\ =\ \coprod_{\alpha\in S} X^\circ_\alpha\,,
   \]
    where $S$ is a ranked poset of rank $n$.
    We have $X^\circ_\alpha\simeq\CC^{\rk(\alpha)}$, and if \defcolor{$X_\alpha$} is the Zariski closure of $X^\circ_\alpha$, then 
   \[
    X_\alpha \ =\ \coprod_{\beta\leq \alpha} X^\circ_\beta\,.
   \]

  \item[(ii)] 
      We have $A_*G/P=\bigoplus_{\beta} [X_\beta]\cdot\ZZ$ and 
      \[A_kG/P\ =\ \bigoplus_{\rk(\beta)=k} [X_\beta]\cdot\ZZ\,,\]
       so that 
       $\{ [ X_\beta]\mid \beta\in S_k\}$ is a $\ZZ$-basis for $A_kG/P$.

  \item[(iii)]
      For any subvarieties $Y,Z\subset G/P$, there is a dense open subset $\calO$ of $G$ such that $gY\cap Z$ is
      generically transverse for $g\in\calO$.

  \item[(iv)] 
     For every $\beta\in S_k$, there is a $\hat{\beta}\in S_{n-k}$ and a dense
     open subset $\calO$ of $G$ such that for any 
     $\beta\in S_{n-k}$ and $g\in\calO$, the intersection 
     $g X_\alpha\cap X_\beta$ is empty if $\alpha\neq\hat{\beta}$, and if  $\alpha=\hat{\beta}$, then the intersection
     is transverse and consists of a single point.
 \end{enumerate}
\end{proposition}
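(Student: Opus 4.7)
The plan is to assemble the four statements from the standard Bruhat-theoretic description of $G/P$, developed in detail in~\cite{Brion} (and, for $G=SL_m\CC$, in~\cite{Fu97}).

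For (i), I would recall the Bruhat decomposition of $G$ with respect to a Borel subgroup $B\subset P$: writing $W$ for the Weyl group of $G$ and $W_P$ for the subgroup corresponding to $P$, one has $G=\coprod_{w\in W^P} B\dot{w}P$, where $W^P\subset W$ is the set of minimal-length coset representatives for $W/W_P$. Passing to $G/P$ yields $G/P=\coprod_{w\in W^P}X^\circ_w$ with $X^\circ_w:=B\dot{w}P/P\cong\CC^{\ell(w)}$, where $\ell$ is the Coxeter length. Setting $S:=W^P$ with the induced Bruhat order makes $S$ into a ranked poset of rank $n=\dim G/P$ with $\rk=\ell$, and the standard closure relation $X_w=\coprod_{v\leq w}X^\circ_v$ gives the stratification claim.

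For (ii), I would invoke the general fact that a complex algebraic variety admitting an algebraic cell decomposition has free abelian Chow groups generated by the closures of its cells, and that on such a variety the cycle class map $A_*\to H_*^\alg$ is an isomorphism, so that rational, algebraic, homological, and numerical equivalence all coincide; see \cite[Example~19.1.11]{Fu84a}. Applied to the decomposition of (i), this gives the basis statement of (ii) together with the equality of rational and numerical equivalence that opens the proposition. Statement (iii) is Kleiman's transversality theorem: since $G$ is connected and acts transitively on the smooth variety $G/P$, for any subvarieties $Y,Z\subset G/P$ the general translate $gY$ meets $Z$ in a generically transverse (and hence proper) intersection.

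For (iv), I would pass to the opposite Borel $B^-\subset G$ to produce the dual Schubert cells $X^{\circ,-}_w:=B^-\dot{w}P/P$ of dimension $n-\ell(w)$. The classical Schubert duality (see~\cite{Brion}, and~\cite{Fu97} for type $A$) identifies an involution $\beta\mapsto\hat\beta$ on $S$ exchanging $S_k$ and $S_{n-k}$ such that $[X_{\hat\beta}]\cdot[X_\beta]=[\mathrm{pt}]$ while $[X_\alpha]\cdot[X_\beta]=0$ for every other $\alpha\in S_{n-k}$. Combining this duality with (iii) applied to $X_\alpha$ and a general translate of $X_\beta$ yields the cardinality-one or empty dichotomy of (iv). The only real content is the Schubert duality in (iv); parts (i)--(iii) are bookkeeping atop well-documented results. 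The main obstacle is simply to isolate the precise combinatorial statement of the duality involution $\beta\mapsto\hat\beta$, as its explicit description varies across the different flag varieties covered by $G/P$; once the Schubert basis and its self-duality under the intersection pairing are in hand, both the nondegeneracy of the pairing and the collapse of rational and numerical equivalence follow immediately.
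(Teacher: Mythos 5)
Your proposal is correct and follows essentially the same route as the paper, which does not supply its own proof but instead cites~\cite{Brion,Fu97} as sources for all four parts (and singles out Kleiman's transversality theorem~\cite{KL74} for (iii) in a subsequent remark). Your reconstruction of those standard facts --- Bruhat decomposition over $W^P$ for (i), the Chow/homology isomorphism for varieties with a cellular decomposition~\cite[Ex.~19.1.11]{Fu84a} for (ii) and the collapse of rational and numerical equivalence, Kleiman's theorem for (iii), and opposite-Borel Schubert duality plus Kleiman for (iv) --- is the standard argument. One small point worth making explicit in (iv): after invoking generic transversality for the complementary-dimension intersection $gX_\alpha\cap X_\beta$, you need Kleiman's theorem in the form that gives the either-empty-or-proper dichotomy, so that the (reduced, finite) point count equals $\deg([X_\alpha]\cdot[X_\beta])$; from this, vanishing of the pairing for $\alpha\neq\hat\beta$ forces emptiness, not merely a degree-zero cycle. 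You have the ingredients, but this last inference deserves a sentence.
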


\begin{remark}
  Part (iii), the moving lemma for subvarieties of $G/P$,
  is Kleiman's Bertini Theorem~\cite{KL74}.\hfill$\diamond$
\end{remark}

\begin{remark}
  When $G/P=\PP^n$, $S=[0,n]$ is a chain of length $n$ and $X_a$ is a linear subspace of dimension $a$.

  When $G/P=\PP^m\times\PP^n$, $S=[0,m]\times[0,n]$ and for $(a,b)\in S$,
  $X_{(a,b)}=K_a\times L_b$ where $K_a\subset\PP^m$ and $L_b\subset\PP^n$ are
  linear subspaces of dimensions $a$ and $b$, respectively.

  We explain this for the Grassmanian of lines in $\PP^4$ in \S~\ref{S:G14}.\hfill$\diamond$
\end{remark}

A flag variety $G/P$ has general witness sets, as rational and numerical equivalence coincide.
Using classes of Schubert varieties for a basis of $A_*G/P$, we obtain \demph{Schubert witness sets}.
For a subvariety $V\subset G/P$ of dimension $k$, a Schubert witness set $(V,gX_\bullet,W_\bullet)$ has the form
$gX_\bullet = (gX_\alpha\mid \alpha\in S_{n-k})$ with $g\in G$ and 
$W_\bullet = (W_\alpha\mid \alpha\in S_{n-k})$ where
 \begin{equation}\label{Eq:SWS}
  W_\alpha\ =\ V\cap g X_\alpha \quad\mbox{for }\alpha\in S_{n-k}
 \end{equation}
is transverse for all $\alpha\in S_{n-k}$.
By (iii) for each $\alpha$, a general translate $gX_\alpha$ meets $V$ transversally, and we may use the same group
element $g$ for every $\alpha\in S_{n-k}$.

By (iv), the intersection matrix $M^{(k)}$ is
\[
  M^{(k)}_{\alpha,\beta}\ =\ \left\{
    \begin{array}{rcl} 1&\ &\mbox{ if }\alpha=\hat{\beta}\\
      0&\ &\mbox{ if }\alpha\neq\hat{\beta}\end{array}\right.\ ,
\]
and thus
 \begin{equation}\label{Eq:SchubertRepresentation}
  [V]\ =\ \sum_{\beta\in S_{n-k}} \deg(W_{\hat{\beta}})[X_{\beta}]\ .
 \end{equation}

 We summarize some properties of Schubert witness sets.
 
\begin{theorem}
  Let $V\subset G/P$ be a subvariety of dimension $k$.
  Each component $W_\alpha$ of a Schubert witness set $W_\bullet$ is a multiplicity-free cycle.
  Any component $W_\alpha$ of a Schubert witness set~$\eqref{Eq:SWS}$ may be moved to any other Schubert witness set
  $W'_\alpha=V\cap hX_\alpha$ along an elementary rational equivalence.
  A non-zero Schubert witness set $W_\alpha$ may be used to sample points of $V$ and to test membership in $V$
  for any $x\in G/P$.
\end{theorem}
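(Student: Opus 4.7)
The first claim follows immediately from the definition of a Schubert witness set (which requires $V\cap gX_\alpha$ to be transverse) together with the observation in \S\S\ref{SS:intersection} that the fundamental cycle of a generically transverse intersection is multiplicity-free.

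For the second claim, the plan is to construct an explicit elementary rational equivalence between $gX_\alpha$ and $hX_\alpha$ and then invoke Theorem~\ref{Th:MoveWitnessSet}. Since $G$ is a connected affine complex algebraic group, it is generated by one-parameter subgroups, and it therefore suffices, by chaining, to handle the case where $g$ and $h$ lie in the image of a single morphism $\psi\colon{\mathbb A}^1\to G$ with $\psi(0)=h$ and $\psi(1)=g$. Define
\[
   Y\ :=\ \overline{\{(\psi(t)\,x,t) : x\in X_\alpha,\ t\in{\mathbb A}^1\}}\ \subset\ G/P\times\PP^1,
\]
where the closure is taken in $G/P\times\PP^1$. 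Then $Y$ is irreducible of dimension $n-k+1$ with surjective second projection $f$, and the fibers $f^{-1}(0)=hX_\alpha$ and $f^{-1}(1)=gX_\alpha$ realize~\eqref{Eq:ERE} as an elementary rational equivalence $[hX_\alpha]-[gX_\alpha]\in\Rat_{n-k}(G/P)$. Applying Theorem~\ref{Th:MoveWitnessSet} produces a cycle $W'_\alpha\in Z_0(V\cap hX_\alpha)$ representing the localized product, and choosing $\psi$ sufficiently general so that Proposition~\ref{S:GmodS}(iii) holds fiberwise over a dense open of ${\mathbb A}^1$ makes the intermediate intersections transverse, yielding $W'_\alpha=V\cap hX_\alpha$.

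For sampling and membership, I would exploit the transitivity of $G$ on $G/P$. For any $y\in G/P$, the set $H_y:=\{g\in G:y\in gX_\alpha\}$ is a non-empty irreducible subvariety of $G$ (a translate of the preimage of $X_\alpha$ under $\pi\colon G\to G/P$). A relative version of Kleiman's theorem---applying Proposition~\ref{S:GmodS}(iii) to the action of the stabilizer of $y$ on $G/P$, together with a tangent-space argument at $y$ when $y$ is a smooth point of $V$---shows that for $g$ in a dense open subset of $H_y$, the intersection $V\cap gX_\alpha$ is transverse (at $y$ and elsewhere). Moving $W_\alpha$ to $W'_\alpha:=V\cap gX_\alpha$ via the second claim then yields a finite subset of $V$ that contains $y$ precisely when $y\in V$: sampling follows by varying $y$ over smooth points of $V$, and membership for arbitrary $y\in G/P$ follows by computing $W'_\alpha$ and testing inclusion.

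The main obstacle is the relative transversality required in the third step: although Proposition~\ref{S:GmodS}(iii) gives transversality for generic $g\in G$, we need it in the constrained family $H_y$ whose members all pass through $y$. A Bertini-style argument on the tangent map of the orbit map $G\to G/P$ at $y$, carried out inside $H_y$, supplies the missing statement.
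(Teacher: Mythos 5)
Your first two parts match the paper's argument. For the first claim, transversality of $V\cap gX_\alpha$ gives a multiplicity-free cycle, exactly as you say. For the second, the paper picks a single rational map $\varphi\colon\CC\to G$ with $\varphi(0)=g$ and $\varphi(1)=h$ (this always exists because a connected linear algebraic group over $\CC$ is a rational variety), forms $Y=\overline{\{(x,t):x\in\varphi(t)X_\alpha\}}\subset G/P\times\PP^1$, and invokes Theorem~\ref{Th:RatEquivHomotopy}. Your reduction via ``generated by one-parameter subgroups, then chain'' is sound but produces a \emph{sequence} of elementary rational equivalences, whereas the theorem asserts a single one; rationality of $G$ makes the chaining unnecessary. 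Your extra condition that $\psi$ be chosen so the intermediate fibers are transverse is also not needed: Theorem~\ref{Th:MoveWitnessSet} already accommodates non-transverse intermediate or terminal fibers via endgames, and the endpoint transversality at $t=0$ is part of the hypothesis that $W'_\alpha=V\cap hX_\alpha$ is itself a Schubert witness set.

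For the third claim your proposal is noticeably more careful than the paper, which disposes of it in one line: since translates $gX_\alpha$ cover $G/P$, the sampling and membership algorithms of \S\S\ref{SS:witness} carry over. You correctly identify what that sentence suppresses: Proposition~\ref{S:GmodS}(iii) gives transversality for $g$ in a dense open subset of $G$, while membership for a given $y$ requires a proper (preferably transverse) intersection $V\cap gX_\alpha$ among the \emph{constrained} family $H_y=\{g: y\in gX_\alpha\}$, which is a proper subvariety of $G$ and need not meet that open set. Your proposed fix---a relative Kleiman/Bertini through the point $y$, using the tangent map of the orbit map---is the right idea and is the Schubert analogue of the classical fact that a general codimension-$k$ linear space through $y$ meets a $k$-fold $V$ properly. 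But, as you yourself flag, this is left as a sketch, so the proposal diagnoses a genuine missing lemma without fully supplying it. In sum: same route as the paper, with a more explicit (and still incomplete) treatment of the sampling/membership step.
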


\begin{proof}
 For $\alpha\in S_{n-k}$, $W_\alpha=V\cap gX_\alpha$~\eqref{Eq:SWS} is transverse, so $W_\alpha$ is a multiplicity-free
 cycle. 
 Suppose that $W'_\alpha=V\cap hX_\alpha$ is the $\alpha$th component of another Schubert witness set for $V$.
 Let $\varphi\colon \CC\to G$ be a smooth rational map with $\varphi(0)=g$ and $\varphi(1)=h$ (e.g.\
 $\varphi(t)=\psi(t)g$ where $\psi$ is a one-parameter subgroup with $\psi(0)=1$ and $\psi(1)=hg^{-1}$).
 Then
 \[
    Y\ =\ \overline{\{(x,t)\mid x\in\varphi(t)X_\alpha\}}\ \subset\ G/P\times\PP^1
 \]
 as in~\eqref{Eq:TwoProjections} with $\iota f^{-1}(0)=gX_\alpha$ and $\iota f^{-1}(1)=hX_\alpha$.
 By Theorem~\ref{Th:RatEquivHomotopy}, $(V\times\PP^1)\cap Y$ is a homotopy between $W_\alpha$ and $W'_\alpha$.

 Since translates of $X_\alpha$ cover $G/P$ (and thus $V$), these homotopies may be used to sample points of $V$,
 and to test membership in $V$ for any $x\in G/P$ as in \S\S\ref{SS:witness}. 
\end{proof}

\begin{remark}
 Property (iv) of Proposition~\ref{S:GmodS}, that the Schubert basis is self-dual under the intersection pairing,
 simplifies the use of general witness sets. 
 A variety with an intersection theory $C_*$ that is finitely generated and has the property that every subvariety $V$
 satisfies the moving lemma for $U=X$ is a \demph{duality space} if the basis is self-dual under the intersection pairing
 as in $(iv)$.\hfill$\diamond$

General witness sets simplify when $X$ is a duality space.
If $\{ L_i^{(k)}\mid i=1,\dotsc,b_k\}$ are subvarieties whose cycles form a basis for $C_kX$ and
$\{ L_i^{(n-k)}\mid i=1,\dotsc,b_{n-k}=b_k\}$ subvarieties representing the dual basis in that
\[
   \deg \bigl( [ L_i^{(k)}]\bullet[L_j^{(n-k)}]\bigr)\ =\ 
     \left\{\begin{array}{rcl} 1&\ & i=j\\ 0&&\mbox{otherwise}
      \end{array}\right.
\]
Then if $V\subset X$ has dimension $k$ with general witness sets $W_i=V\cap\Lambda_i$, where the intersection is
transverse and $[\Lambda_i]=[L_i^{(n-k)}]$, then 
 \begin{equation}\label{Eq:NiceCycle}
   [V]\ =\ \sum \deg(W_i) \cdot [L_i^{(k)}]\,.
 \end{equation}
as in~\eqref{Eq:SchubertRepresentation}.\hfill$\diamond$
\end{remark}

\subsection{Schubert witness sets for $\GG(1,\PP^4)$}\label{S:G14}
Let \defcolor{$\GG(1,\PP^4)$} be the Grassmannian of lines in $\PP^4$.
This is a homogeneous space of dimension 6 for $SL_5\CC$.
Its Schubert decomposition is in terms of a flag of linear spaces 
\[
  \defcolor{M_\bullet}\ \colon\ M_0\ \in\ M_1\ \subset\ M_2\ \subset\ M_3\ \subset\ \PP^4\,,
\]
where $\dim M_i=i$.
Schubert varieties are parametrized by pairs $i,j$ with $0\leq i<j\leq 4$.
Then $\defcolor{X_{ij}}=\defcolor{X_{ij}M_\bullet}$ is 
\[
  \defcolor{X_{ij}M_\bullet}\ :=\ \{\ell\in \GG(1,\PP^4) \mid \emptyset\neq\ell\cap M_i,
  \ell\subset M_j\}\,.
\]
The dimension of $X_{ij}$ is $i+j-1$ and $X_{ij}\subset X_{ab}$ if $i\leq a$ and $j\leq b$.
Duality is given by $\widehat{ij}=4{-}j,4{-}i$.
We display the partial order $S$ for $\GG(1,\PP^4)$ below.
\[
  \begin{picture}(118,82)
   \put(2,2){\includegraphics{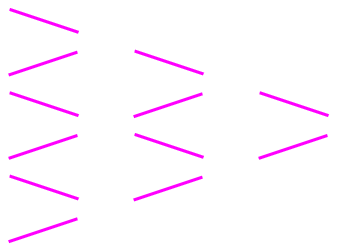}}
   \put(0,72){34}
         \put(36,60){24}
   \put(0,48){23}   \put(72,48){14}
         \put(36,36){13}   \put(108,36){04}
   \put(0,24){12}   \put(72,24){03}
         \put(36,12){02}
   \put(0,0){01}    
  \end{picture}
\]
Duality is obtained by reflecting in the horizontal line of symmetry with $\widehat{13}=13$ and $\widehat{04}=04$.

Let $\defcolor{Q}\subset\PP^4$ be a smooth quadric which is the zero set of a quadratic polynomial
$f$ and let $\defcolor{V_Q}\subset\GG(1,\PP^4)$ be the set of lines that lie on $Q$.
This has codimension 3 in $\GG(1,\PP^4)$.
Indeed, consider the parametrization $\ell(t)=tp+(1-t)q$ of the line through the points $p,q\in\PP^4$.
Then $f(\ell(t))$ is a quadratic polynomial in $t$ whose coefficients are polynomials in the coordinates of $p$ and $q$.
This line lies on $Q$ when the three coefficients of $f(\ell(t))$ vanish.

A Schubert witness set for $V_Q$ has the form
\[
  \bigl(V_Q, (W_{13},W_{04}), (gX_{13}, gX_{04}) \bigr)\,,
\]
where $W_\alpha=V_Q\cap gX_\alpha$ is transverse.
Let $M_\bullet$ be the flag in $\PP^4$ that defines $gX_\alpha$.
Since
\[
  X_{04}M_\bullet\ =\ \{\ell\mid M_0\in\ell\}
\]
is the set of lines that contain the point $M_0$ and $M_0\not\in Q$ (as $M_\bullet$ is general), we have
$W_{04}=V_Q\cap X_{04}M_\bullet=\emptyset$.
As
\[
  X_{13}M_\bullet\ =\ \{\ell \mid M_1\cap\ell\neq\emptyset\mbox{ and } \ell\subset M_3\}\,,
\]
we see that $V_Q\cap X_{13}M_\bullet$ is the set of lines $\ell$ on $Q\cap M_3$ that meet $M_1$.
Because $M_3$ is a general $\PP^3$, $Q\cap M_3$ is a quadratic hypersurface in $\PP^3$.
This contains two families of lines, and each point of $Q\cap M_3$ lies on one line from each family.
Since $M_1$ meets $Q$ in two points, $W_{13}=V_Q\cap X_{13}M_\bullet$ consists of four lines.
As
 \[
   [V_Q]\ =\ \deg(W_{13})[X_{13}] + \deg(W_{04})[X_{04}]\ =\  4[X_{13}]\,,
 \]
if $Q'$ is a second quadric, then
\[ 
  [V_Q\cap V_{Q'}]\ =\ [V_Q]^2\ =\ 16[X_{13}]^2\ =\ 16[X_{01}]\,,
\]
which recovers the well-known fact that 16 lines lie on a general quartic surface $Q\cap Q'$ in $\PP^4$. 

\providecommand{\bysame}{\leavevmode\hbox to3em{\hrulefill}\thinspace}
\providecommand{\MR}{\relax\ifhmode\unskip\space\fi MR }
\providecommand{\MRhref}[2]{%
  \href{http://www.ams.org/mathscinet-getitem?mr=#1}{#2}
}
\providecommand{\href}[2]{#2}

\end{document}